\newtheorem{theorem}{Theorem}
\newtheorem{lemma}[theorem]{Lemma}
\newtheorem{corollary}[theorem]{Corollary}
\theoremstyle{definition}
\newtheorem{example}[theorem]{Example}
\newtheorem{remark}[theorem]{Remark}
\numberwithin{theorem}{section}
\numberwithin{equation}{section}
\numberwithin{table}{section}
\newcommand\ZZ{\mathbb{Z}}
\newcommand\NN{\mathbb{N}}
\newcommand\QQ{\mathbb{Q}}
\newcommand\OO{\mathcal{O}}
\newcommand\ppp{\mathfrak{p}}
\newcommand\PPP{\mathfrak{P}}
\newcommand\fff{\mathfrak{f}}
\newcommand\Kt{{\tilde{K}}}
\DeclareMathOperator\re{Re}
\DeclareMathOperator\N{N}
\DeclareMathOperator\Gal{Gal}
\DeclareMathOperator\Frob{Frob}
\newcommand\where{\: \mid\: }
\newcommand\qr[2]{\left(\frac{#1}{#2}\right)}
\newcommand\gp[1]{\langle #1\rangle}
\begin{document}

\title
[
Arithmetic progressions in binary quadratic forms and norm forms
]
{
Arithmetic progressions in\\ binary quadratic forms and norm forms
}

\date{26 October 2018}

\author{Christian Elsholtz}
\address{
Technische Universit\"at Graz\\
Institut f\"ur Analysis und Zahlentheorie\\
Kopernikusgasse 24/II\\
A-8010 Graz\\
Austria
}
\email{elsholtz@math.tugraz.at}

\author{Christopher Frei}
\address{
School of Mathematics\\
University of Manchester\\
Oxford Road, Manchester M13 9PL\\
United Kingdom
}
\email{christopher.frei@manchester.ac.uk}

\begin{abstract}
 We prove an upper bound for the length of an arithmetic progression represented by an irreducible integral binary quadratic form or a norm form, which depends only on the form and the progression's common difference. For quadratic forms, this improves significantly upon an earlier result of Dey and Thangadurai.
\end{abstract}

\subjclass[2010]{11E25, 11D57
 	(11E12)}

\maketitle

\section{Introduction}

\subsection{Binary quadratic forms}
De la Vall\'ee Poussin \cite{dlVPpt3, dlVPpt4} proved that the set of primes represented by an irreducible primitive integral binary quadratic form $F$ that is not negative definite has positive relative density in the primes\footnote{In fact, he considered only forms in which the coefficient of $xy$ is even. The general case was proved by Bernays in his dissertation \cite{zbMATH02625310}, where he also obtained an explicit error term.}. This fact, which is usually seen today as a well-known consequence of the Chebotarev density theorem, implies together with the Green-Tao theorem \cite{MR2415379} that each such quadratic form represents arithmetic progressions of arbitrary length. Our first main result provides an upper bound for the length of an arithmetic progression represented by $F$ in terms of $F$ and the progression's modulus.

Recall that the integral binary quadratic form $F=ax^2+bxy+cy^2$ is \emph{irreducible}, if it is irreducible over $\QQ$, and \emph{primitive}, if $\gcd(a,b,c)=1$. Its discriminant is $d=b^2-4ac$. Assume now that $F$ is irreducible and primitive, and that it is positive definite in case $d<0$.  Let $\Delta$ be the discriminant of the quadratic number field over which $F$ splits into linear factors. The \emph{conductor} of $F$ is the positive integer $f$ for which $d=f^2\Delta$.

\begin{theorem}\label{thm:qf_primitive}
  Let $F$ be an irreducible primitive integral binary quadratic form and let $\Delta$ be the discriminant of its splitting field. Assume that $F$ is positive definite if $\Delta<0$. Let
  \begin{equation*}
A=\{\ell+rg\where 0\leq r<k\},\ \ell\in\ZZ,\ g,k\in\NN,
\end{equation*}
be an arithmetic progression represented by $F$, i.e. ${\rm A}\subset F(\ZZ^2)$. Then
  \begin{equation*}
    k\leq C\left(\sqrt{|\Delta|}\log g  + |\Delta|^L\right),
\end{equation*}
  with absolute constants $C$ and $L$. Moreover, the value $L=7.999$ is admissible.
\end{theorem}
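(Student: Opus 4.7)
The plan is to combine a strong divisibility constraint at inert primes for $\Delta$ with the progression's structure, then produce a sufficiently small inert prime to make this constraint bite. First I would establish the local obstruction: for every prime $p$ with $\qr{\Delta}{p}=-1$, every $n\in F(\ZZ^2)$ has even $p$-adic valuation. Via the correspondence between $F$-representations and norms of invertible ideals in the order of discriminant $d=f^2\Delta$, every invertible ideal above an inert prime $p$ has norm a power of $p^2$. Concretely, the identity $4aF(x,y)=(2ax+by)^2-dy^2$ with $\qr{d}{p}=\qr{\Delta}{p}=-1$ (after reducing $F$ to an equivalent form with $p\nmid a$) handles the case $p\nmid f$; a short descent stripping factors of $p$ from the conductor handles $p\mid f$.

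Next, I would apply this to the given progression. Fix an inert prime $p$ with $p\nmid 2g$. Since $p\nmid g$, the indices $r\in\{0,\dots,k-1\}$ with $p\mid\ell+rg$ form a sub-progression of common difference $p$. If $k\geq 2p$, this sub-progression contains two consecutive terms whose values differ by $pg$, so they cannot both be divisible by $p^2$; hence some term has $p$-adic valuation exactly $1$, contradicting the previous step. Therefore $k<2p$, and it remains to produce an inert prime $p\nmid 2g$ with $p\leq C(\sqrt{|\Delta|}\log g+|\Delta|^L)/2$.

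To produce such a prime I would work in two regimes. For the $\sqrt{|\Delta|}\log g$ term, a P\'olya--Vinogradov estimate shows that the character $\qr{\Delta}{\cdot}$ assumes the value $-1$ on a positive proportion of integers in any interval of length $\gg\sqrt{|\Delta|}\log|\Delta|$; an interval of length $\asymp\sqrt{|\Delta|}\log g$ then yields an integer $m$ coprime to $2g$ with $\qr{\Delta}{m}=-1$, whose smallest inert prime factor is the desired $p$. The $|\Delta|^L$ term handles the small-$|\Delta|$ regime (and the rare case when the sieve fails) via an effective unconditional bound on the least prime non-residue of a real Dirichlet character.

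The main obstacle is securing the explicit admissible value $L=7.999$: this requires a quantitative synthesis of the best known effective bounds on least prime non-residues of real characters (with explicit treatment of potential Siegel zeros) combined with a sieve over the $O(\log g)$ prime factors of $g$. The elementary algebraic obstruction and the progression-structure argument are short; essentially all the work lies in this analytic input with an explicit constant.
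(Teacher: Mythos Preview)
Your local obstruction (inert primes force even $p$-adic valuation on $F(\ZZ^2)$) and your progression argument (if $p\nmid g$ and $k\geq 2p$ then two consecutive multiples of $p$ in the progression cannot both be $\equiv 0\bmod p^2$) are correct and match the paper exactly; this is the contrapositive of the paper's Lemma~2.1.

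The gap is in your analytic step. You need the \emph{least inert prime not dividing $g$} to be $\ll \sqrt{|\Delta|}\log g + |\Delta|^L$, and neither of your proposed tools delivers this. P\'olya--Vinogradov tells you that a positive proportion of \emph{integers} $m\leq X$ satisfy $\qr{\Delta}{m}=-1$, but to force one of these to be coprime to $2g$ you must sieve out the divisors of $2g$; doing this by M\"obius over $d\mid\mathrm{rad}(2g)$ costs a factor $2^{\omega(g)}$ against the P\'olya--Vinogradov error, and $2^{\omega(g)}$ is in general far larger than any fixed power of $\log g$ (take $g$ a primorial). Likewise, a bound on the least prime nonresidue produces one inert prime, but nothing prevents that prime from dividing $g$, so it does not bound the least inert prime \emph{coprime to $g$}. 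In particular this is not where the constant $L=7.999$ comes from: least-nonresidue bounds are far stronger than $|\Delta|^{7.999}$, yet are useless here.

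What the paper does instead is bound the \emph{density} of inert primes: it shows $\sum_{p\leq X,\ \chi(p)=-1}\log p \gg X/\sqrt{|\Delta|}$ for all $X\gg |\Delta|^L$, via an explicit Linnik-type lower bound for primes in arithmetic progressions (the $\sqrt{|\Delta|}$ loss coming from a possible Siegel zero, and $L=7.999$ from Maynard's version of Linnik's constant). Then the coprimality-to-$g$ constraint is handled in one stroke: since every inert prime $p\leq k/2$ divides $g$, one has $\log g \geq \sum_{p\leq k/2,\ \text{inert}}\log p \gg k/\sqrt{|\Delta|}$ once $k\geq 2|\Delta|^L$. This bypasses the sieve entirely, and is the step you are missing. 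Your framework becomes a complete proof if you replace ``P\'olya--Vinogradov on integers plus a sieve'' by this density estimate for inert \emph{primes}.
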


The conditions that $F$ be primitive and positive definite if $\Delta<0$ are no substantial restrictions. 
If $F$ is negative definite, replace it by $-F$, which represents arithmetic progressions of the same modulus and length. If $F$ is not primitive, replace it by $F/\gcd(a,b,c)$. This does not change the length of a progression, and it only decreases the modulus $g$. Some slight improvements to the bound in Theorem \ref{thm:qf_primitive} are recorded in Remark \ref{rem:improvement}.

Theorem \ref{thm:qf_primitive} improves in all aspects upon an earlier result of Dey and Thangadurai \cite{MR3295668}, which yields
\begin{equation}\label{eq:DT_bound}
  k < C \ell (g^2|d|)^{L_1}, 
\end{equation}
with a constant $L_1$, for which the value $15.6$ is admissible. The improvement is particularly visible when we regard $F$ as fixed, in which case, even if the dependence on $\ell$ in \eqref{eq:DT_bound} is ignored, our bound is essentially the logarithm of \eqref{eq:DT_bound}, or when $\Delta$ and $g$ are fixed, in which case our bound is constant whereas \eqref{eq:DT_bound} grows polynomially in $f$. Our proof of Theorem \ref{thm:qf_primitive} is not longer or less elementary than \cite{MR3295668}. 

\subsection{Norm forms}
Every irreducible integral binary quadratic form is a constant multiple of a norm form of a quadratic field (see \cite[\S 2.7]{MR0195803}). Our second main result is a far-reaching generalisation of  Theorem \ref{thm:qf_primitive} to arbitrary norm forms
\begin{equation}\label{eq:norm_form_no_scaling}
  \N(x_1,\ldots,x_n):=\N(x_1\omega_1+\cdots+x_n\omega_n),
\end{equation}
where $\omega_1,\ldots,\omega_n$ is a basis of a number field $K$. Since the length of an arithmetic progression is invariant under multiplication by constants, we rescale our  form $\N$ in an analogous way to how quadratic norm forms are rescaled to yield primitive integral binary quadratic forms. Denote the $\ZZ$-module generated by the basis $\omega_1,\ldots,\omega_n$ by
  \begin{equation*}
    M = \gp{\omega_1,\ldots,\omega_n},
  \end{equation*}
Then the $\OO_K$-module $\OO_KM$ generated by $\omega_1,\ldots,\omega_n$ is a fractional ideal of $K$ that contains $M$. The norm $\N(\OO_KM)$ is defined as the absolute value of the determinant of the matrix of basis change from a basis of $\OO_K$ to a basis of $\OO_KM$. It is independent of the choice of these bases. To the basis $\omega_1,\ldots,\omega_n$, we attach the form
  \begin{equation}\label{eq:def_norm_form} F(x_1,\ldots,x_n):=\frac{\N(x_1\omega_1+\cdots+x_n\omega_n)}{\N(\OO_KM)}\in\QQ[x_1,\ldots,x_n].
  \end{equation}
  If the basis $\omega_1,\ldots,\omega_n$ is replaced by the basis $a\omega_1,\ldots,a\omega_n$ for some $a\in \QQ$, $a>0$, this does not change the form $F$, as then both numerator and denominator are multiplied by $a^n$. We may thus assume without loss of generality that $M\subset\OO_K$, so $\OO_KM$ is an ideal of $\OO_K$ and $\N(\OO_KM)=[\OO_K:\OO_KM]$ is its absolute norm. This makes it clear that $F(\ZZ^n)\subset\ZZ$, since for all $\alpha\in M$ we have $\alpha\OO_K\subset\OO_KM$ and thus $\N(\OO_KM)\mid \N(\alpha\OO_K)=|\N(\alpha)|$. The following result is our generalisation of Theorem \ref{thm:qf_primitive} to arbitrary norm forms.

\begin{theorem}\label{thm:nf_general}
  Let $K$ be a number field of degree $n\geq 2$ with basis $\omega_1,\ldots,\omega_n$ and consider a form $F$ as in \eqref{eq:def_norm_form}. Let $\Kt$ be a normal closure of $K$ and denote its discriminant by $\Delta_\Kt$. Let
  \begin{equation*}
  A=\{\ell+rg\where 0\leq r<k\},\ \ell\in\ZZ,\ g,k\in\NN,
\end{equation*}
be an arithmetic progression represented by $F$, i.e. $A\subset F(\ZZ^n)$. Then
  \begin{equation}\label{eq:norm_ap_bound}
    k\leq C_{n}\left(|\Delta_\Kt|^5\log g  + |\Delta_\Kt|^L\right),
\end{equation}
with a constant $C_{n}$ depending at most on $n$ and an absolute constant $L$, for which one may take $L=694$.
\end{theorem}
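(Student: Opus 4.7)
The plan is to locate a prime $p$ whose Frobenius class in $\Gal(\Kt/\QQ)$ forces the $p$-adic valuation of every nonzero value of $F$ to avoid the value~$1$, and then to combine this local obstruction with $p \nmid g$ to bound~$k$.

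Let $G = \Gal(\Kt/\QQ)$ and $H = \Gal(\Kt/K)$. For $p$ unramified in $\Kt$, the residue degrees $f(\ppp_i/p)$ of the primes of $K$ above $p$ are the cycle lengths of $\Frob_p$ acting on $G/H$. I would pick a conjugacy class $C\subset G$ whose elements have no fixed points on $G/H$: by Jordan's classical lemma, since $H$ is a proper subgroup of $G$ (as $K\neq\QQ$), the union $\bigcup_g gHg^{-1}$ is a proper subset of $G$, and any $\sigma$ outside has no fixed points on $G/H$ (for $\sigma\cdot xH=xH$ is equivalent to $\sigma\in xHx^{-1}$). Hence for any $p$ unramified in $\Kt$ with $\Frob_p\in C$, every residue degree $f(\ppp_i/p)\geq 2$.

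Now take such a $p$ also coprime to $g\cdot\N(\OO_K M)$. For $\alpha\in M\setminus\{0\}$ with $F(\mathbf x)=\N(\alpha)/\N(\OO_K M)$, we have
\[
v_p(F(\mathbf x))=\sum_i f(\ppp_i/p)\,v_{\ppp_i}(\alpha)\in\{0\}\cup\ZZ_{\geq 2}.
\]
Because $p\nmid g$, the set $R=\{r\in\{0,\ldots,k-1\}:p\mid\ell+rg\}$ is the intersection of $\{0,\ldots,k-1\}$ with a single residue class modulo~$p$. If $|R|\geq 2$, pick consecutive elements $r_1<r_2$ of $R$; then $r_2-r_1=p$, so $v_p\bigl((\ell+r_2g)-(\ell+r_1g)\bigr)=v_p(pg)=1$, while both terms have $v_p\geq 2$, forcing the difference to have $v_p\geq 2$, a contradiction. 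Hence $|R|\leq 1$, which gives $k\leq 2p-1$.

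It remains to find such a prime of controlled size. I would invoke an unconditional quantitative Chebotarev density theorem (Lagarias--Odlyzko together with refinements due to Weiss and Zaman) in the form $\pi_C(X)\geq (|C|/|G|)\pi(X)-O(|\Delta_\Kt|^a)$ once $X$ exceeds a polynomial threshold in $|\Delta_\Kt|$. The number of primes to avoid (dividing $g\,\N(\OO_K M)\,\Delta_\Kt$) is $O(\log g+\log|\Delta_\Kt|+\log\N(\OO_K M))$; after controlling $\N(\OO_K M)$ via a suitable normalization of $M$ (for instance, by clearing rational prime factors from $\OO_K M$ through rescaling the basis), a short optimization over $X$ yields the desired bound $p\leq C_n(|\Delta_\Kt|^5\log g+|\Delta_\Kt|^L)$, and together with $k\leq 2p-1$ this gives the theorem. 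The main obstacle is the quantitative Chebotarev input: the explicit exponents $5$ and $L=694$ presumably come from the specific effective Chebotarev bound employed rather than being intrinsic to the structural argument.
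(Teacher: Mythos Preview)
Your structural argument coincides with the paper's: Jordan's lemma supplies a conjugacy class with no fixed points on $G/H$, for the corresponding unramified primes every nonzero value of $F$ has $p$-adic valuation $\neq 1$, this forces (equivalently to your $k\le 2p-1$) that every such $p$ with $2p\le k$ divides $g$, and Thorner--Zaman's effective Chebotarev bound then supplies the exponents $5$ and $L=694$. Two points of execution differ from the paper and need tightening.

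First, the side condition $p\nmid\N(\OO_KM)$ is unnecessary, and your proposed rescaling does not remove it: replacing $M$ by $qM$ leaves $F$ unchanged but multiplies $\N(\OO_KM)$ by $|q|^n$, so $v_p(\N(\OO_KM))$ moves only in multiples of $n$ and cannot in general be made zero. The clean fix (used in the paper) is to write $F(\mathbf x)=\N\bigl(\alpha(\OO_KM)^{-1}\bigr)$ as the norm of the \emph{integral} ideal $\alpha(\OO_KM)^{-1}$ (integral since $\alpha\in M\subset\OO_KM$); then $v_p(F(\mathbf x))=\sum_{\ppp\mid p}f(\ppp/p)\,v_\ppp\bigl(\alpha(\OO_KM)^{-1}\bigr)$ with all summands $\ge 0$, and the conclusion $v_p\in\{0\}\cup\ZZ_{\ge 2}$ follows with no hypothesis on $\N(\OO_KM)$. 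Second, comparing the \emph{count} $\pi_C(X)$ with $\omega(g)$ costs a factor $\log X$ when you solve for $X$, so your optimisation actually yields $p\ll_n|\Delta_\Kt|^5\log g\cdot(\log|\Delta_\Kt|+\log\log g)+|\Delta_\Kt|^L$, slightly weaker than the stated bound. The paper instead works with log-weights: since every such prime with $2p\le k$ divides $g$, one has $\log g\ge\sum_{p\le k/2,\,\Frob_p\subset U}\log p\gg_n k/|\Delta_\Kt|^5$ once $k\gg_n|\Delta_\Kt|^L$, giving the bound directly. Equivalently, in your single-prime framing, compare $\sum_{p\mid g}\log p\le\log g$ with a lower bound for $\sum_{p\le X,\,\Frob_p\in C}\log p$ to locate a suitable $p\le X$ without the stray logarithm.
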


Note that, up to the quality of the exponents on $|\Delta|$, Theorem \ref{thm:qf_primitive} is a special case of Theorem \ref{thm:nf_general}. Theorem \ref{thm:nf_general} also implies the same bound \eqref{eq:norm_ap_bound} for the length of an arithmetic progression represented by a usual norm form as in \eqref{eq:norm_form_no_scaling} coming from a basis $\omega_1,\ldots,\omega_n$ with $\omega_i\in\OO_K$. This is clear, since then $\N(\OO_K M)\geq 1$, so the re-scaling in \eqref{eq:def_norm_form} only decreases the modulus $g$ of an arithmetic progression, while keeping the length intact.

Our next result shows that, as in the case of binary quadratic forms, Theorem \ref{thm:nf_general} is non-trivial, in the sense that there is no upper bound depending only on $F$ for  the length of an arithmetic progression represented by $F$.

\begin{theorem}\label{thm:norm_arith}
  The form $F$, as in (\ref{eq:def_norm_form}), 
represents arithmetic progressions of arbitrary length.
\end{theorem}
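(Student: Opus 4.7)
The plan is to reduce to the full norm form on $\OO_K$ and then combine the classical positive density of norm-primes with the Green-Tao theorem, mirroring the argument the introduction sketches for binary quadratic forms.

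\textbf{Reduction to the full norm form.} Assume without loss of generality that $M\subset\OO_K$, and set $d:=[\OO_K:M]$. Since the finite group $\OO_K/M$ has exponent dividing $d$, we have $d\OO_K\subset M$; hence for every $\beta\in\OO_K$ the element $d\beta$ lies in $M$ and has integer coordinates $(x_1,\ldots,x_n)$ in the basis $\omega_1,\ldots,\omega_n$. Writing $\mathfrak{b}:=\OO_KM$ one computes
\[
F(x_1,\ldots,x_n)=\frac{\N(d\beta)}{\N(\mathfrak{b})}=c\,\N(\beta),\qquad c:=\frac{d^n}{\N(\mathfrak{b})},
\]
and a short check with the tower $M\subset\mathfrak{b}\subset\OO_K$ and multiplicativity of indices confirms $c\in\ZZ_{>0}$. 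Thus $c\cdot\N_{K/\QQ}(\OO_K)\subset F(\ZZ^n)$; since scaling by $c$ preserves the length of any arithmetic progression, it suffices to produce arbitrarily long progressions inside $\N_{K/\QQ}(\OO_K)$.

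\textbf{Positive density of norm-primes and Green-Tao.} A rational prime $p$ satisfies $\pm p\in\N_{K/\QQ}(\OO_K)$ iff some prime ideal of $\OO_K$ above $p$ of residue degree one is principal. Applying Chebotarev's density theorem to the Hilbert class field of $K$ (composed with the Galois closure $\Kt$ when $K/\QQ$ is non-normal, with Frobenius conjugacy classes treated in the standard way) shows that the set $P_K$ of such primes has positive relative density in the primes\,---\,the natural higher-degree extension of the de la Vall\'ee Poussin density cited for quadratic forms. By the pigeonhole principle at least one of $P_K^+:=\{p\in P_K:p\in\N_{K/\QQ}(\OO_K)\}$ or $P_K^-:=\{p\in P_K:-p\in\N_{K/\QQ}(\OO_K)\}$ still has positive density in the primes; the Green-Tao theorem~\cite{MR2415379}, in its form guaranteeing arbitrarily long arithmetic progressions inside any positive-density subset of primes, then delivers such a progression inside $P_K^\pm$, which (possibly after multiplication by $-1$) lies in $\N_{K/\QQ}(\OO_K)$, and scaling by $c$ transfers it into $F(\ZZ^n)$.

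\textbf{Main obstacle.} The substantive point is the density step for non-Galois $K$: one must merge the splitting condition of $p$ in $K$ with the principality condition on a degree-one prime above $p$ into a single Chebotarev statement, typically by working inside the compositum of $\Kt$ with the Hilbert class field of $K$. Once this is in hand, the rest of the argument is a clean reduction and transfer.
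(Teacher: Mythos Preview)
Your argument is correct, and it takes a genuinely different route from the paper's. The paper first establishes Theorem~\ref{thm:norm_primes} (positive density of primes represented by $F$ whenever $M$ is invertible over its ring of multipliers~$\OO$), which requires building a ring class field $K_\OO^+$ for the order~$\OO$ and the attendant class field theory of Lemmas~\ref{lem:subgroup_properties} and~\ref{lem:CFT}. It then deduces Theorem~\ref{thm:norm_arith} by choosing some $\gamma\in M$, noting that $\gamma\OO\subset M$ is principal hence invertible, applying Theorem~\ref{thm:norm_primes} to the submodule $\gamma\OO$, and scaling. Your proof instead passes all the way down to the sublattice $d\OO_K\subset M$ and invokes only the classical Hilbert class field of $K$: primes splitting completely in (the Galois closure of) the Hilbert class field give principal degree-one primes of $\OO_K$, hence $\pm p\in\N_{K/\QQ}(\OO_K)$, and the pigeonhole on the sign followed by Green--Tao finishes. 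This is strictly more elementary for the purpose of Theorem~\ref{thm:norm_arith} and avoids the order-theoretic class field machinery entirely; the price is that you do not obtain the stronger Theorem~\ref{thm:norm_primes} along the way. Your verification that $c=d^n/\N(\mathfrak b)\in\ZZ_{>0}$ via the tower $M\subset\mathfrak b\subset\OO_K$ is exactly right, and the ``main obstacle'' you flag is handled, as you indicate, by working in the Galois closure of the Hilbert class field over~$\QQ$ and noting that completely split primes there have positive Chebotarev density.
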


It is not hard to see that Theorem \ref{thm:norm_arith} is implied by its special case where $\omega_1,\ldots,\omega_n$ is an integral basis of $K$. Moreover, in this case one can deduce Theorem \ref{thm:norm_arith} from \cite[Theorem 5.2]{MR3742196}, which, with the linear forms $f_i(u,v)=u+(i-1)v$ for $1\leq i\leq v$, provides an asymptotic formula for the number of suitably constrained norms representing $k$-term progressions.

In this note we follow a different approach to proving Theorem \ref{thm:norm_arith}. It will follow from the Green-Tao theorem \cite{MR2415379} combined with our next result, which states that a large class of norm forms as in \eqref{eq:def_norm_form} represent a positive proportion of the primes. This is certainly well known if the basis $\omega_1,\ldots,\omega_n$ is an integral basis of $K$, so $M=\OO_K$, and also when $[K:\QQ]=2$, in which case it states that an irreducible primitive integral binary quadratic form that is not negative definite represents a positive density of primes. We are not aware of a published result in our situation, which generalises both of these special cases simultaneously. Recall that the ring of multipliers $\OO=\{\alpha\in K\where \alpha M\subset M\}$ of $M$ is an order of the number field $K$.

\begin{theorem}\label{thm:norm_primes}
  Suppose that $M$ is an invertible ideal of its ring of multipliers $\OO$. Then the set of primes represented by the form $F$ defined in \eqref{eq:def_norm_form} has positive relative density.
\end{theorem}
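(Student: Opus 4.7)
The plan is to combine the theory of invertible ideals in the order $\OO$ with the Chebotarev density theorem applied to the ring class field of $\OO$. The starting observation is that for any nonzero $\alpha = x_1\omega_1+\cdots+x_n\omega_n \in M$, the inclusion $\alpha\OO_K \subset \OO_K M$ yields an integral $\OO_K$-ideal $\aaa$ with $\alpha\OO_K = (\OO_K M)\aaa$, so that $F(\alpha) = \pm\N(\aaa)$. Conversely, to show that $F$ represents $\pm p$ for a rational prime $p$, it suffices to exhibit a prime $\ppp$ of $\OO_K$ with $\N(\ppp) = p$ together with a generator of the $\OO_K$-ideal $\ppp \cdot \OO_K M$ that already lies in $M$.

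To produce such a generator I would exploit the invertibility of $M$ as an $\OO$-ideal. After replacing $M$ by $aM$ for a suitable $a\in K^\times$ (which changes $F$ only by a sign), I may assume $M$ is coprime to the conductor $\fff = \{\beta \in \OO_K : \beta\OO_K \subset \OO\}$ of $\OO$. For any prime $\ppp$ of $\OO_K$ coprime to $\fff$ with $\N(\ppp)=p$, the contraction $\qqq := \ppp \cap \OO$ is an invertible prime of $\OO$ with $\qqq\OO_K = \ppp$, and if $[\qqq] = [M]^{-1}$ in the Picard group $\mathrm{Pic}(\OO)$, then $\qqq M$ is principal: $\qqq M = \alpha\OO$ with $\alpha \in \qqq M \subset \OO M = M$. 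Extending scalars, $\alpha\OO_K = \ppp \cdot \OO_K M$, so $F(\alpha) = \pm p$.

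The density count comes from class field theory for orders: the ring class field $H$ of $\OO$ is a finite abelian extension of $K$ whose Galois group is $\mathrm{Pic}(\OO)$ under the Artin map on ideals coprime to $\fff$. The condition $[\qqq]=[M]^{-1}$ therefore translates to $\Frob_{\ppp}$ being a specific element $\sigma \in \Gal(H/K)$. Passing to the Galois closure $\tilde H$ of $H$ over $\QQ$ and applying Chebotarev in $\Gal(\tilde H/\QQ)$ to the nonempty, conjugation-stable set of elements that fix some $\QQ$-embedding of $K$ into $\tilde H$ pointwise and act as $\sigma$ on the corresponding copy of $H$, one obtains a positive density of rational primes $p$ for which a suitable $\ppp$ exists. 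The sign of $F(\alpha) = \pm p$ can be turned into $+$ by replacing $\alpha$ with $-\alpha$ (when $n$ is odd) or by multiplying by a unit of $\OO$ of negative norm (when one exists, handling the even case); in the remaining subcase one refines the argument using a narrow ring class field, which still yields positive density.

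The main obstacle I anticipate is the clean formulation of class field theory for the non-maximal order $\OO$ when $K/\QQ$ is not Galois, in particular the transfer of Frobenius information from $\Gal(H/K)$ to Frobenius at rational primes via the Galois closure $\tilde H/\QQ$. The other ingredient that must be verified carefully is the norm-preserving bijection between invertible $\OO$-ideals coprime to $\fff$ and $\OO_K$-ideals coprime to $\fff$, which underlies both the Artin map identification and the reduction in the second step.
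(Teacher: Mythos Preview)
Your proposal is correct and follows essentially the same approach as the paper: reduce to an ideal coprime to the conductor, use the ring class field of $\OO$ together with Chebotarev in the Galois closure over $\QQ$ to produce degree-one primes $\ppp$ in the required ideal class, and then read off a generator $\alpha\in M$ from the principality of $\ppp M$ (resp.\ $PI$). The only difference is organisational: the paper works with a \emph{narrow} ring class field (modulus $\fff\infty$, building in the condition $\N(\alpha)>0$) from the outset, which yields $F(\alpha)=p$ directly and bypasses your case analysis on signs.
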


If $[K:\QQ]=2$, then every ideal of $\OO$ with ring of multipliers $\OO$ is invertible, see \cite[Corollary 4.4]{conrad}. However, this can fail in degree $3$. In some situations, we can extend the conclusion of Theorem \ref{thm:norm_primes} to ideals that are not invertible. Our choice of the number field, order
and ideal in the following example was motivated by  \cite[Example 3.5]{conrad}.

\begin{example}\label{ex:fail}
  Let $K=\QQ(\sqrt[3]{2})$ and choose the basis
  $8,2\sqrt[3]{2},2\sqrt[3]{2}^2$. Then
  $M=\gp{8,2\sqrt[3]{2},2\sqrt[3]{2}^2}$. The ring of
  multipliers of $M$ is $\OO=\ZZ+2\OO_K=\gp{1,2\sqrt[3]{2},2\sqrt[3]{2}^2}$,
  and $M$ is not an invertible ideal of $\OO$. We have $\OO_KM=\gp{4,2\sqrt[3]{2},2\sqrt[3]{2}^2}$ with $\N(\OO_KM)=16$, so the corresponding norm
  form is
  \begin{eqnarray*}
    F(x,y,z)& =&\frac{\N(8x+2\sqrt[3]{2}y+2\sqrt[3]{2}^2z)}{16}=32x^3+y^3+2z^3-12xyz.
  \end{eqnarray*}
Even though $M$ is not invertible, we can use Theorem \ref{thm:norm_primes} to show that $F$ represents a positive density of the primes.
\end{example}

\begin{corollary}\label{cor:explicit_form}
The integral form $32x^3+y^3+2z^3-12xyz$ represents a positive density of primes. 
\end{corollary}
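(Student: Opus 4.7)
The plan is to reduce to Theorem~\ref{thm:norm_primes} by switching to an auxiliary $\QQ$-basis of $K=\QQ(\sqrt[3]{2})$. The obstruction in Example~\ref{ex:fail} is that $M$ is not invertible over its ring of multipliers $\OO=\ZZ+2\OO_K$; however, $\OO$ itself is trivially invertible as the unit $\OO$-ideal. I therefore consider the basis
\[
\omega_1=1,\qquad \omega_2=2\sqrt[3]{2},\qquad \omega_3=2\sqrt[3]{2}^2
\]
of $K$, whose $\ZZ$-span $M'=\gp{1,2\sqrt[3]{2},2\sqrt[3]{2}^2}$ is exactly the order $\OO$.

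A quick verification gives $\OO_K M'=\OO_K$ (since $1\in M'$) and hence $\N(\OO_K M')=1$. The form associated to this basis by \eqref{eq:def_norm_form} is therefore
\[
F'(x,y,z)=\N(x+2y\sqrt[3]{2}+2z\sqrt[3]{2}^2)=x^3+16y^3+32z^3-24xyz.
\]
Since $M'=\OO$ is an invertible ideal of its ring of multipliers $\OO$, Theorem~\ref{thm:norm_primes} applies to $F'$ and yields a positive density of primes represented by $F'$.

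To finish, I would observe the polynomial identity $F(Z,X,2Y)=F'(X,Y,Z)$, obtained by monomial-by-monomial comparison. This shows $F'(\ZZ^3)\subset F(\ZZ^3)$, so the positive density of primes represented by $F'$ transfers immediately to $F$.

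The only step requiring thought is the choice of the auxiliary basis: it must simultaneously generate an $\OO$-module that is invertible over its ring of multipliers and produce a form embedding into $F$ via an integer substitution. The basis $1,2\sqrt[3]{2},2\sqrt[3]{2}^2$ of $\OO$ accomplishes both, with the substitution $Z\mapsto 2Z$ in $F$ rescaling exactly the monomials in which $Z$ appears; everything else is routine.
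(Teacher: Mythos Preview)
Your proof is correct and is essentially the paper's argument in explicit coordinates: the paper takes $\gamma=2\sqrt[3]{2}\in M$ with $\N(\gamma)=16$ and applies Theorem~\ref{thm:norm_primes} to the invertible principal ideal $\gamma\OO\subset M$, obtaining primes $\N(\alpha)/16$ with $\alpha\in\gamma\OO\subset M$. Your form $F'$ is precisely the norm form attached to $\OO$ (equivalently, to $\gamma\OO$), and your substitution $(x,y,z)=(Z,X,2Y)$ is nothing other than the coordinate matrix of the multiplication-by-$\gamma$ map $\OO\to M$, since $2\sqrt[3]{2}\,(X+2Y\sqrt[3]{2}+2Z\sqrt[3]{2}^2)=8Z+2X\sqrt[3]{2}+4Y\sqrt[3]{2}^2$.
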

It would be interesting to see how far this can be generalised, and whether one can eliminate the restriction that $M$ be invertible from Theorem \ref{thm:norm_primes}.

\section{A congruence argument modulo $p^2$}

Let $S\subset\NN$ be a subset and let $\mathcal{P}_S$ be the set of primes $p$ that satisfy
\begin{equation}\label{eq:def_PS}
  \text{for all $n\in S$: if } p\mid n \text{ then }p^2\mid n.
\end{equation}

\begin{lemma}\label{lem:key}
  Let $\ell\in\ZZ$, $k,g\in\NN$, and suppose that $\ell+rg\in S$ holds for all 
$r\in\{0,\ldots,k-1\}$. Then any $p\in\mathcal{P}_S$ with 
$2p\leq k$ satisfies $p\mid g$.
\end{lemma}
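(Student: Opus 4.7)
The plan is to argue by contradiction: assume that some $p\in\mathcal{P}_S$ satisfies $2p\leq k$ and $p\nmid g$, and derive $p\mid g$.

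First I would use the hypothesis $p\nmid g$ to understand the divisibility pattern of the progression modulo $p$. Since $g$ is invertible modulo $p$, the map $r\mapsto \ell+rg\pmod{p}$ on $\{0,\ldots,p-1\}$ is a bijection onto $\ZZ/p\ZZ$, so in every block of $p$ consecutive values of $r$ there is exactly one index where $p\mid\ell+rg$. Using $2p\leq k$, the range $\{0,\ldots,k-1\}$ contains at least two full blocks, hence there exist indices $r_1<r_2$ in $\{0,\ldots,k-1\}$ with $r_2=r_1+p$ and $p\mid \ell+r_1g$, $p\mid \ell+r_2g$.

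Next I would invoke the defining property of $\mathcal{P}_S$. Both $\ell+r_1g$ and $\ell+r_2g$ lie in $S$ by assumption, and both are divisible by $p$, so condition \eqref{eq:def_PS} forces $p^2\mid\ell+r_1g$ and $p^2\mid\ell+r_2g$. Subtracting gives $p^2\mid (r_2-r_1)g = pg$, hence $p\mid g$, contradicting the assumption $p\nmid g$.

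There is no genuine obstacle here; the argument is a short pigeonhole combined with the $p^2$ condition. The only point worth double-checking is that the two indices $r_1,r_2$ both fit inside $\{0,\ldots,k-1\}$, which is precisely why the hypothesis $2p\leq k$ (rather than merely $p\leq k$) is needed.
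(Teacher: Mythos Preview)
Your proof is correct and essentially identical to the paper's: both assume $p\nmid g$, locate two terms $\ell+ig$ and $\ell+(i+p)g$ in $\{0,\ldots,k-1\}$ that are divisible by $p$, and use the $p^2$-condition to reach a contradiction. The only cosmetic difference is that the paper phrases the contradiction as ``at least one of the two terms fails to be divisible by $p^2$'', whereas you subtract first and deduce $p\mid g$ directly.
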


\begin{proof}
  If $p\nmid g$, then the progression $\ell, \ell + g, \ell + 2g,\ldots, \ell + (2p-1)g$ will cover every residue class modulo $p$ exactly twice. Hence there exists $0 \leq i \leq p-1$ such that $\ell + ig \equiv \ell + (i + p)g \equiv 0 \bmod p$, but that at least one of $\ell + ig$ or $\ell + (i + p)g$ is not congruent to $0 \bmod p^2$, a contradiction to $p \in \mathcal{P}_S$.
\end{proof}

\begin{lemma}\label{lem:estimate}
  Assume that for $C_1,C_2>0$ we have an inequality
  \begin{equation}\label{eq:density}
    \sum_{\substack{p\in\mathcal{P}_S\\p\leq X}}\log p\geq \frac{X}{C_1},\quad\text{ for all }X\geq C_2.
  \end{equation}
  Then any arithmetic progression $\{\ell+rg\where 0\leq r<k\}\subset S$ 
satisfies
  \begin{equation*}
    k\leq 2\max\left\{C_1\log g,\: C_2\right\}.
  \end{equation*}
\end{lemma}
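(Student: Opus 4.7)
The plan is to combine the previous lemma, which controls \emph{which} primes in $\mathcal{P}_S$ can appear up to $k/2$, with the hypothesised density statement, which controls \emph{how many} such primes (weighted by $\log p$) must appear up to $k/2$. The two bounds give a direct comparison forcing $k$ to be small.

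More concretely, I set $X = k/2$ and split into cases according to whether $X \ge C_2$. In the case $k/2 \ge C_2$, I first apply Lemma \ref{lem:key}: every $p \in \mathcal{P}_S$ with $p \le k/2$ satisfies $2p \le k$, hence $p \mid g$. Therefore
\begin{equation*}
  \sum_{\substack{p \in \mathcal{P}_S \\ p \le k/2}} \log p \;\le\; \sum_{p \mid g} \log p \;\le\; \log g,
\end{equation*}
where the last step uses that the product of distinct prime divisors of $g$ is at most $g$. On the other hand, the hypothesis \eqref{eq:density} applied with $X = k/2 \ge C_2$ gives
\begin{equation*}
  \sum_{\substack{p \in \mathcal{P}_S \\ p \le k/2}} \log p \;\ge\; \frac{k}{2 C_1}.
\end{equation*}
Comparing the two bounds yields $k \le 2 C_1 \log g$. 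In the remaining case $k/2 < C_2$, we trivially get $k < 2 C_2$. Either way, $k \le 2\max\{C_1 \log g, C_2\}$.

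There is no real obstacle here: once Lemma \ref{lem:key} is in hand, the argument is a one-line upper/lower-bound sandwich together with a trivial case split. The only thing to be mildly careful about is to verify that the ``$2p \le k$'' hypothesis in Lemma \ref{lem:key} is indeed what comes out of choosing $X = k/2$, and that $\sum_{p \mid g} \log p \le \log g$ holds (which is immediate from $\prod_{p \mid g} p \mid g$, using $g \in \NN$).
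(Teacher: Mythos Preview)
Your proof is correct and follows essentially the same approach as the paper's: both split off the case $k<2C_2$ as trivial, then in the remaining case use Lemma~\ref{lem:key} to see that every $p\in\mathcal{P}_S$ with $p\le k/2$ divides $g$, and compare the resulting upper bound $\sum_{p\in\mathcal{P}_S,\,p\le k/2}\log p\le\log g$ with the lower bound $k/(2C_1)$ from~\eqref{eq:density}. The paper phrases the upper bound via $g\ge P:=\prod_{p}p$ and then takes logarithms, but this is the same inequality you wrote.
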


\begin{proof}
  We may suppose $k\geq 2C_2$, as otherwise there is nothing to prove. We define
  \begin{equation*}
  P:=\prod_{\substack{p\in\mathcal{P}_S\\p\leq k/2}}p.
\end{equation*}
Then $P\mid g$ by Lemma \ref{lem:key}. Hence, 
\begin{align*}
  g\geq P = \exp\Big(\sum_{\substack{p\in\mathcal{P}_S\\p\leq k/2}}\log p\Big)\geq 
\exp\left(\frac{k}{2C_1}\right),
\end{align*}
and thus $k\leq 2C_1\log g$.
\end{proof}

\section{Binary quadratic forms}
Let $F=ax^2+bxy+cy^2$ be a primitive irreducible integral binary quadratic form that is not negative definite, and write $d=f^2\Delta$, where $d=b^2-4ac$ is the discriminant of $F$,
$f$ is its conductor, and $\Delta$ is the discriminant of its splitting field. We are interested in the value set $S=F(\ZZ^2)$ and the corresponding set $\mathcal{P}_S$ of primes $p$ defined by \eqref{eq:def_PS},
i.e.\ $\text{for all $n\in S$: if } p\mid n \text{ then }p^2\mid n.$

\begin{lemma}\label{lem:qr_cond}
  There is a subset $T\subset\left(\ZZ/\Delta\ZZ\right)^\times$ of cardinality $\varphi(|\Delta|)/2$, 
  such that $\mathcal{P}_S$ contains every odd prime $p$ that satisfies $(p\bmod\Delta)\in T$.
\end{lemma}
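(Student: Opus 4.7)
The plan is to identify precisely those odd primes $p$ that lie in $\mathcal{P}_S$ by means of the classical identity
\[
4aF(x,y) = (2ax+by)^2 - dy^2,
\]
which converts divisibility of $F(x,y)$ by powers of $p$ into the quadratic character of $d=f^2\Delta$ modulo those powers. I would then take $T$ to be the set of residues modulo $\Delta$ where the Kronecker symbol $\chi_\Delta := (\Delta/\cdot)$ takes the value $-1$. The link between the two is that, once $\gcd(p,\Delta)=1$, the Legendre symbol $(d/p)$ agrees with $\chi_\Delta(p)$ whenever $p\nmid f$, while $p^2 \mid d$ automatically when $p\mid f$.

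Concretely, I would prove that every odd $p$ with $\chi_\Delta(p)=-1$ lies in $\mathcal{P}_S$, by splitting on whether $p\mid f$. If $p\nmid f$, then $p\nmid d$ and $(d/p)=\chi_\Delta(p)=-1$; a quick check using primitivity rules out $p\mid a$ (otherwise $d\equiv b^2\pmod p$, contradicting $(d/p)=-1$). Then $p\mid F(x,y)$ implies $(2ax+by)^2\equiv dy^2\pmod p$, and since $d$ is a non-residue this forces $p\mid y$ and then $p\mid x$, so $p^2\mid F(x,y)$. If instead $p\mid f$, then $p^2\mid d$; when $p\nmid a$, the identity gives $4aF\equiv (2ax+by)^2\pmod{p^2}$, so $p\mid F$ forces $p\mid 2ax+by$ and therefore $p^2\mid 4aF$, i.e.\ $p^2\mid F$. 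When $p\mid a$, one combines $p^2\mid b^2-4ac$ with $\gcd(a,b,c)=1$ and $p$ odd to deduce $p^2\mid a$, $p\mid b$, and $p\nmid c$; then $F\equiv cy^2\pmod p$, so $p\mid F$ forces $p\mid y$, and writing $y=py'$ and substituting directly yields $p^2\mid F$.

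With this in hand, I set
\[
T := \{\, t \in (\ZZ/\Delta\ZZ)^\times : \chi_\Delta(t) = -1 \,\}.
\]
Since $\Delta$ is the discriminant of a quadratic number field (hence a fundamental discriminant), $\chi_\Delta$ is a non-trivial primitive real character modulo $|\Delta|$, so $|T|=\varphi(|\Delta|)/2$. Any odd prime $p$ with $(p\bmod\Delta)\in T$ satisfies $\chi_\Delta(p)=-1$ and therefore lies in $\mathcal{P}_S$ by the case analysis above. The main obstacle is the sub-case $p\mid f$ with $p\mid a$: there the elementary substitution succeeds only after one has used both primitivity and the divisibility $p^2\mid b^2-4ac$ to pin down $p^2\mid a$ and $p\mid b$; this is the only place where the argument requires more than reading off the identity $4aF=(2ax+by)^2-dy^2$.
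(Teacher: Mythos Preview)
Your proof is correct and follows the same overall strategy as the paper: both take $T=\{t\in(\ZZ/\Delta\ZZ)^\times:\chi_\Delta(t)=-1\}$ and reduce the case $p\nmid d$ to anisotropy of $F$ modulo $p$ via $(d/p)=\chi_\Delta(p)=-1$. The difference lies in the case $p\mid d$. The paper treats this in one line by observing that $F$ is degenerate modulo $p$, so $F\equiv A L^2\bmod p$, and then asserts $p\mid L(x,y)\Leftrightarrow p^2\mid F(x,y)$; strictly speaking that implication needs the congruence $F\equiv A L^2$ to hold modulo $p^2$, which in turn requires $p^2\mid d$ --- true here because $p\nmid\Delta$ forces $p\mid f$, but not stated. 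You instead work directly with the identity $4aF=(2ax+by)^2-dy^2$, make the input $p^2\mid d$ explicit, and handle the sub-case $p\mid a$ by hand using primitivity. So your argument is slightly longer but also slightly more complete than the paper's at this point; otherwise the two proofs coincide.
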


\begin{proof}
For every odd prime $p\mid d$, the quadratic form $F$ is degenerate modulo $p$ and thus of the form $F(x,y)\equiv A L(x,y)^2\bmod p$, for some $A\in\ZZ$ and a linear form $L(x,y)$ with integral coefficients. Since $p$ can not divide all coefficients of $F$, we get $A\not\equiv 0\bmod p$. Hence, for $x,y\in\ZZ$,
  \begin{equation*}
    p\mid F(x,y)\Longleftrightarrow p\mid L(x,y)\Longleftrightarrow p^2\mid F(x,y), 
  \end{equation*}
  and thus $p\in \mathcal{P}_S$. Assume now that $p\nmid d$. Then, with $\qr{\cdot}{\cdot}$ denoting the Kronecker symbol,
\begin{equation*}
  \qr{d}{p}=\qr{f^2\Delta}{p}=\qr{\Delta}{p}.
\end{equation*}
Since the Kronecker symbol $\qr{\Delta}{\cdot}$ is a non-principal quadratic Dirichlet character modulo $\Delta$ for every fundamental discriminant $\Delta$, the complement $T\subset (\ZZ/\Delta\ZZ)^*$ of its kernel has cardinality $\varphi(|\Delta|)/2$. If $(p\bmod \Delta)\in T$, then $d$ is not a square modulo $p$, so $F$ is not isotropic modulo $p$, which means exactly that $F(x,y)\equiv 0\bmod p$ implies $x,y\equiv 0\bmod p$, and thus $F(x,y)\equiv 0\bmod p^2$.
\end{proof}

\begin{lemma}\label{lem:qf_density_inexplicit}
  The set $S=F(\ZZ^2)$ satisfies \eqref{eq:density} with $C_1\ll\sqrt{|\Delta|}$ and $C_2\ll|\Delta|^L$, for some $L>0$.
\end{lemma}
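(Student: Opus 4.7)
The plan is to combine Lemma \ref{lem:qr_cond} with an effective prime number theorem for Dirichlet characters. Since $\Delta$ is a fundamental discriminant, the Kronecker symbol $\chi := \qr{\Delta}{\cdot}$ is a real, non-principal, primitive Dirichlet character of conductor $|\Delta|$, and the set $T$ from Lemma \ref{lem:qr_cond} coincides with $\{a\bmod\Delta : \chi(a)=-1\}$. Writing $\mathbf{1}_{\chi(p)=-1}=(1-\chi(p))/2$, Lemma \ref{lem:qr_cond} bounds the left-hand side of \eqref{eq:density} from below by
\[
\sum_{\substack{p\leq X,\ p\text{ odd} \\ p\nmid\Delta,\ \chi(p)=-1}} \log p
= \tfrac{1}{2}\theta(X) - \tfrac{1}{2}\theta(X,\chi) + O(\log|\Delta|),
\]
where $\theta(X,\chi):=\sum_{p\leq X}\chi(p)\log p$. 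The classical PNT gives $\theta(X)\geq (1-o(1))X$, so the required inequality with $C_1 \ll \sqrt{|\Delta|}$ reduces to showing
\[
\theta(X,\chi) \leq X - \frac{c\,X}{\sqrt{|\Delta|}}
\]
for $X\geq|\Delta|^L$, with absolute constants $c,L>0$.

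For this upper bound I would invoke the explicit formula expressing $\theta(X,\chi)$ as a sum over non-trivial zeros of $L(s,\chi)$, combined with a Landau-type zero-free region and a zero-density estimate. A possible Siegel zero $\beta_1\in(0,1)$ contributes the non-positive quantity $-X^{\beta_1}/\beta_1$, which is harmless for an \emph{upper} bound on $\theta(X,\chi)$. The strength $1/\sqrt{|\Delta|}$ in the target inequality comes ultimately from Landau's effective lower bound $L(1,\chi)\gg 1/\sqrt{|\Delta|}$, itself a consequence of the Dirichlet class number formula with the trivial bounds $h\geq 1$ and the regulator bounded below. This forces $1-\beta_1 \gg 1/\sqrt{|\Delta|}$ for any putative Siegel zero, and, combined with the explicit formula, determines the threshold $X\geq |\Delta|^L$ above which the zero-sum contributions drop below $X/\sqrt{|\Delta|}$.

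The main obstacle is obtaining a genuinely polynomial threshold $|\Delta|^L$, rather than the superpolynomial $\exp(c(\log|\Delta|)^2)$ of the Siegel-Walfisz range. Extracting an explicit and small value of $L$ requires quoting quantitative zero-density estimates (of Jutila or Heath-Brown type) together with explicit Landau-Page-type bounds, after which the estimate on $\theta(X,\chi)$ above drops out by routine bookkeeping. This is presumably the step where the value $L=7.999$ eventually admitted in Theorem \ref{thm:qf_primitive} is optimised.
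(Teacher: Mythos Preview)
Your approach differs substantially from the paper's. The paper's proof is essentially a two-line citation: by Lemma~\ref{lem:qr_cond} it suffices that
\[
\sum_{\substack{p\leq X\\ p\equiv c\bmod\Delta}}\log p \gg \frac{X}{\varphi(|\Delta|)\sqrt{|\Delta|}}
\]
for each $c$ coprime to $\Delta$ once $X\gg|\Delta|^L$, and this is precisely a quantitative Linnik theorem, quoted as \cite[Corollary~18.8]{IK04}. Summing over the $\varphi(|\Delta|)/2$ classes in $T$ gives the lemma. No explicit formula, zero-density estimate, or Deuring--Heilbronn phenomenon is invoked directly; all of that machinery is packaged inside the cited result.

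Your route via the single quadratic character $\chi=\qr{\Delta}{\cdot}$ is more direct and in fact stronger than you seem to realise. You correctly observe that a Siegel zero contributes $-X^{\beta_1}/\beta_1<0$ to $\theta(X,\chi)$ and hence only \emph{helps} your upper bound. But your next paragraph, attributing the factor $1/\sqrt{|\Delta|}$ to Landau's bound $1-\beta_1\gg|\Delta|^{-1/2}$, is inconsistent with that observation: once the Siegel zero is on your side, the remaining zeros are handled by log-free zero density (together with Deuring--Heilbronn when a Siegel zero is present), giving $\theta(X,\chi)\leq \epsilon X$ for $X\geq|\Delta|^{L(\epsilon)}$ and hence $C_1\ll 1$, not merely $C_1\ll\sqrt{|\Delta|}$. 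The $\sqrt{|\Delta|}$ in the lemma is an artefact of the paper's reduction to \emph{individual} residue classes, where the worst-case Linnik bound must absorb a possible Siegel zero; it plays no intrinsic role in your single-character argument. That said, your write-up remains a sketch: the phrase ``routine bookkeeping'' conceals exactly the Linnik-type input (log-free zero density, zero-repulsion) that the paper simply cites, so in total effort you are proposing to rederive rather than quote the hard step. (The specific value $L=7.999$, incidentally, is obtained in the paper from a result of Maynard rather than Jutila or Heath-Brown.)
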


\begin{proof}
  Using Lemma \ref{lem:qr_cond}, it is enough to show that
  \begin{equation}\label{eq:linnik2}
    \sum_{\substack{p\leq X\\p\equiv c\bmod \Delta}}\log p\gg \frac{X}{\varphi(|\Delta|)\sqrt{|\Delta|}}
  \end{equation}
  holds for all $c$ with $\gcd(c,\Delta)=1$, whenever $X\gg|\Delta|^L$. For $|\Delta|\ll 1$, this follows from the prime number theorem in arithmetic progressions. For all large enough $|\Delta|$, on the other hand, this is a quantitative version of Linnik's theorem on the least prime in an arithmetic progression, see \cite[Corollary 18.8]{IK04}.
\end{proof}

\subsection*{Proof of Theorem \ref{thm:qf_primitive}}
To prove the upper bound on $k$ apply Lemma \ref{lem:estimate} with $C_1, C_2$ from Lemma \ref{lem:qf_density_inexplicit}.
We now show that $L=7.999$ is admissible.
For large enough $|\Delta|$ and $x\geq |\Delta|^{7.999}$, \cite[Proposition 5]{Maynard2013} implies that
\begin{equation*}
    \sum_{\substack{p\leq X\\p\equiv c\bmod \Delta}}\log p\gg \frac{\lambda X}{\varphi(|\Delta|)}.
\end{equation*}
Here, $\lambda\gg 1$ if no Dirichlet $L$-function $L(s,\chi)$, for any Dirichlet character $\chi$ modulo $\Delta$, has a real zero in the range $1-\eta/(\log |\Delta|)\leq \re z\leq 1$, for some sufficiently small absolute constant $\eta>0$. Otherwise, if such a zero $z$ exists, then it is necessarily unique, and we take $\lambda\in (0,\eta]$ such that $z=1-\lambda/(\log |\Delta|)$. In any case, we have $\lambda\gg 1/\sqrt{|\Delta|}$ (see \cite[Theorem 3]{zbMATH03480705}), and thus \eqref{eq:linnik2}. Use this instead of \cite[Corollary 18.8]{IK04} in the proof of Lemma \ref{lem:qf_density_inexplicit}.

\begin{remark}\label{rem:improvement}
Pintz's result \cite[Theorem 3]{zbMATH03480705} provides in fact the bound $\lambda\gg \log(|\Delta|)/\sqrt{|\Delta|}$. Moreover, one can use Siegel's result $\lambda\gg_{\epsilon} |\Delta|^{-\epsilon}$, which holds for $\epsilon>0$  with an ineffective implied constant (see \cite[Theorem 5.28]{IK04}). These lead to slight improvements in Theorem \ref{thm:qf_primitive}, giving the bounds       \begin{equation*}
    k\ll \frac{\sqrt{|\Delta|}}{\log |\Delta|}\log g + |\Delta|^L\quad\text{ and }\quad k\ll_{\epsilon}|\Delta|^{\epsilon}\log g + |\Delta|^L,
  \end{equation*}
  respectively, where the implied constant in the second bound is ineffective for $\epsilon<1/2$.
\end{remark}

\section{Arbitrary norm forms}

In this section we prove Theorem \ref{thm:nf_general}. Let  $K$ be a number field of degree $n\geq 2$ and $F$ a form as in \eqref{eq:def_norm_form}. Consider the value set
\begin{equation*}
  S=F(\ZZ^n) = \{\N(\alpha)/\N(\OO_KM)\where \alpha\in M\}
\end{equation*}
and the corresponding set of primes $\mathcal{P}_S$ defined in \eqref{eq:def_PS}. As explained before the statement of Theorem \ref{thm:nf_general}, we may assume that $M\subset\OO_K$, so $\OO_K M$ is an ideal of $\OO_K$.

\begin{lemma}\label{lem:nf_cond}
Let $p$ be a prime such that all prime ideals $\ppp$ of $\OO_K$ above $p$ satisfy $p^2\mid\N(\ppp)$. Then $p\in\mathcal{P}_S$.
\end{lemma}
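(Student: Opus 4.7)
The plan is to work directly with the $p$-adic valuation of $\N(\alpha)/\N(\OO_K M)$, using the factorization of ideals into prime ideals. Recall that for a nonzero $\alpha \in \OO_K$ we have the formula
\begin{equation*}
  v_p\bigl(|\N(\alpha)|\bigr) = \sum_{\ppp\mid p} v_\ppp(\alpha)\, f(\ppp\mid p),
\end{equation*}
where $f(\ppp\mid p)$ is the residue degree, since $\N(\ppp) = p^{f(\ppp\mid p)}$. The same identity applies to $\N(\OO_K M)$ with $v_\ppp(\OO_K M)$ in place of $v_\ppp(\alpha)$. Subtracting gives
\begin{equation*}
  v_p\!\left(\frac{|\N(\alpha)|}{\N(\OO_K M)}\right) = \sum_{\ppp\mid p}\bigl(v_\ppp(\alpha) - v_\ppp(\OO_K M)\bigr)\, f(\ppp\mid p).
\end{equation*}

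The key observation is that the containment $\alpha \in M \subset \OO_K M$ forces $v_\ppp(\alpha) \geq v_\ppp(\OO_K M)$ for every prime ideal $\ppp$, so every summand above is a non-negative integer multiple of $f(\ppp\mid p)$. The hypothesis $p^2 \mid \N(\ppp)$ for every $\ppp \mid p$ translates to $f(\ppp\mid p) \geq 2$ for each such $\ppp$. Consequently, each summand is either $0$ or at least $2$, and in particular the whole sum cannot equal $1$.

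Therefore, if $p$ divides $\N(\alpha)/\N(\OO_K M)$, meaning the sum is positive, then it is actually at least $2$, so $p^2$ divides the quotient. This gives $p \in \mathcal{P}_S$, as required. The only step where one has to be a bit careful is to justify that the expression $\N(\alpha)/\N(\OO_K M)$ is indeed a non-negative integer whose $p$-adic valuation equals the displayed sum — this was already noted in the paper, via $\N(\OO_K M)\mid |\N(\alpha)|$, so it is essentially a bookkeeping matter. I do not anticipate any serious obstacle here; the content of the lemma is precisely the translation between the divisibility condition on $\N(\ppp)$ and the valuation-theoretic statement.
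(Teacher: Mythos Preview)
Your proof is correct and follows essentially the same approach as the paper. The paper phrases it in ideal-theoretic language---writing $m=\N(\alpha(\OO_KM)^{-1})$ for the integral ideal $\alpha(\OO_KM)^{-1}$, observing that $p\mid m$ forces some $\ppp\mid p$ to divide this ideal, and concluding $p^2\mid\N(\ppp)\mid m$---but this is exactly your valuation computation recast: the non-negativity of $v_\ppp(\alpha)-v_\ppp(\OO_KM)$ is the integrality of $\alpha(\OO_KM)^{-1}$, and the step ``some summand is $\geq f(\ppp\mid p)\geq 2$'' is the step ``$\N(\ppp)\mid m$''.
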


\begin{proof}
  Let $p$ be a prime satisfying the hypothesis of the lemma and $m\in S$ with $p\mid m$. Then $m=\N(\alpha)/\N(\OO_KM)=\N(\alpha(\OO_KM)^{-1})$ for some $\alpha\in M\subset\OO_KM$. Since $p\mid m$, there is a prime ideal $\ppp$ of $\OO_K$ above $p$ that divides the ideal $\alpha(\OO_KM)^{-1}$, and thus $p^2\mid\N(\ppp)\mid\N(\alpha(\OO_KM)^{-1})=m$.
\end{proof}

Recall that $\Kt$ is a normal closure of $K$. For every prime $p$ that is unramified in $\Kt$, we write $\Frob_p\subset\Gal(\Kt/\QQ)$ for the conjugacy class of Frobenius automorphisms of prime ideals above $p$ in $\OO_\Kt$. 

\begin{lemma}\label{lem:nf_density}
  There is a non-empty union $U$ of conjugacy classes of $\Gal(\Kt/\QQ)$, such that $\mathcal{P}_S$ contains every prime $p$ that is unramified in $\Kt$ and satisfies $\Frob_p\subset U$.
\end{lemma}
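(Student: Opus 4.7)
The approach is to translate the hypothesis of Lemma~\ref{lem:nf_cond} into a purely group-theoretic condition on the Frobenius, and then invoke the classical fact that a finite group is not covered by conjugates of a proper subgroup.

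First I would combine Lemma~\ref{lem:nf_cond} with the formula $\N(\ppp)=p^{f(\ppp/p)}$ to reformulate the sufficient condition for $p\in\mathcal{P}_S$ as: every prime $\ppp$ of $\OO_K$ above $p$ has residue degree $f(\ppp/p)\geq 2$. Now fix $p$ unramified in $\Kt$ and set $G=\Gal(\Kt/\QQ)$, $H=\Gal(\Kt/K)$; since $[K:\QQ]=n\geq 2$, by Galois correspondence $H$ is a proper subgroup of $G$. Pick any prime $\PPP$ of $\OO_\Kt$ above $p$ with Frobenius $\sigma=\Frob_\PPP\in \Frob_p$. The standard description of prime splitting via Frobenius says that the primes of $\OO_K$ above $p$ are in bijection with the orbits of $\langle\sigma\rangle$ acting on the coset space $G/H$, with the residue degree of a prime equal to the size of the corresponding orbit.

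Next I would observe that an orbit of $\langle\sigma\rangle$ on $G/H$ has size $1$ if and only if $\sigma$ fixes some coset $gH$, i.e.\ $g^{-1}\sigma g\in H$ for some $g\in G$. Therefore all primes of $\OO_K$ above $p$ have residue degree $\geq 2$ exactly when the conjugacy class of $\sigma$ is disjoint from $H$. This motivates the definition
\begin{equation*}
U := G\setminus\bigcup_{g\in G}gHg^{-1},
\end{equation*}
which by construction is a union of conjugacy classes of $G$, and which, by the discussion above combined with Lemma~\ref{lem:nf_cond}, has the property that $\Frob_p\subset U$ forces $p\in\mathcal{P}_S$ for every $p$ unramified in $\Kt$.

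The only thing left to verify is that $U$ is non-empty, i.e.\ that the conjugates of the proper subgroup $H$ do not cover $G$. This is Jordan's classical lemma and follows from a direct counting argument: the identity element lies in every conjugate of $H$, so
\begin{equation*}
\Big|\bigcup_{g\in G}gHg^{-1}\Big| \leq [G:H](|H|-1)+1 < [G:H]\cdot|H| = |G|,
\end{equation*}
using $[G:H]\geq 2$. Hence $U\neq\emptyset$, completing the proof. The only potential subtlety is the orbit/residue-degree dictionary, but this is a standard consequence of the transitive action of $G$ on primes of $\OO_\Kt$ above $p$ together with the fact that $\Frob_\PPP$ generates the decomposition group in the unramified case.
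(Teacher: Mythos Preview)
Your proof is correct and follows essentially the same route as the paper: both define $U$ as the union of conjugacy classes disjoint from $H=\Gal(\Kt/K)$, reduce via Lemma~\ref{lem:nf_cond} to showing that $\Frob_p\cap H=\emptyset$ forces all residue degrees above $p$ to be at least $2$, and conclude $U\neq\emptyset$ by Jordan's lemma. The only cosmetic difference is that the paper cites \cite[Lemma~13.5]{neukirchANT} for the splitting dictionary whereas you spell out the orbit/residue-degree correspondence explicitly.
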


\begin{proof}
Write $G=\Gal(\Kt/\QQ)$ and $H=\Gal(\Kt/K)\subsetneqq G$. Let $U$ be the union of all conjugacy classes $C$ of $G$ with $C\cap H=\emptyset$. By \cite[Lemma 13.5]{neukirchANT}, all unramified primes $p$ with $\Frob_p\subset U$ satisfy the hypothesis of Lemma \ref{lem:nf_cond} and are thus in $\mathcal{P}_S$. Since $H$ is a proper subgroup of $G$, its conjugate subgroups do not cover $G$. Indeed, the number of conjugate subgroups of $H$ is at most $[G:H]$ by the orbit-stabiliser theorem applied to the action of $G$ on its subgroups by conjugation. On the other hand, the union of these conjugate subgroups is not disjoint. Hence, there is $\sigma\in G$ all of whose conjugates are in $G\smallsetminus H$, which shows that $U\neq\emptyset$.
\end{proof}

\subsection*{Proof of Theorem \ref{thm:nf_general}}
A result of Thorner and Zaman \cite[(3.2)]{zbMATH06748168} guarantees that
\[ \sum_{\substack{p\leq X\\\Frob_p\subset U}}1 \geq \delta \frac{X}{\log X},\]
where $\delta\gg_n\frac{1}{|\Delta_\Kt|^5}$, under the conditions that $|\Delta_\Kt|$ is large enough and $X\gg_n |\Delta_\Kt|^L$, for a positive constant $L$ (where $L=694$ is admissible).   For $|\Delta_\Kt|\ll_n 1$, the same bound follows from the Chebotarev density theorem. Using this and Lemma \ref{lem:nf_density}, we see that 
  \begin{equation*}
    \sum_{\substack{p\leq X\\p\in\mathcal{P}_S}}\log p
\geq\sum_{\substack{p\leq X\\\Frob_p\subset U}}\log p
\geq \sum_{\substack{\text{ the first } \delta X/\log X\\\text{primes } p}}\log p
\gg_{n} \frac{X}{|\Delta_\Kt|^5}.
\end{equation*}
Here the estimate of the prime number theorem that the $n$-th prime
has size $p_n \sim n \log n$
was used. Hence, we can apply Lemma 
\ref{lem:estimate} with 
$C_1\ll_{n}|\Delta_\Kt|^5$, $C_2\ll_n|\Delta_\Kt|^L$.

\section{Class field theory}
In this section, we prove Theorem \ref{thm:norm_arith} and Theorem \ref{thm:norm_primes}, as well as Corollary \ref{cor:explicit_form}. Let $K$ be a number field and $\OO$ an order of $K$ of conductor $\fff$. We
construct a class field similar to the ring class field of $\OO$ studied in
\cite{MR3368170}, except that our congruence subgroup also incorporates the
condition that $\N(\alpha)>0$. We consider the modulus $\fff\infty$
of $\OO_K$, where $\infty$ is the product of all real places of $K$. Let $I_{K}^{\fff}$ be the subgroup of
fractional $\OO_K$-ideals of $K$ generated by the prime ideals not dividing
$\fff$, and let $P_{K,\OO}^{\fff,+}$ be the subgroup generated by all principal ideals
\begin{equation*}
  \alpha\OO_K\quad\text{ with }\quad\alpha\in\OO,\ \alpha\OO+\fff=\OO,\ N(\alpha)>0.
\end{equation*}

We moreover let $I(\OO,\fff)$ denote the subgroup of invertible ideals of $\OO$
that is generated by all prime ideals $P$ with $P+\fff=\OO$, and $P(\OO,\fff)^+$
the subgroup generated by all principal ideals 
\begin{equation*}
  \alpha\OO\quad \text{ with }\alpha\in\OO,\ \alpha\OO+\fff=\OO,\ \N(\alpha)>0.
\end{equation*}
\begin{lemma}\label{lem:subgroup_properties}\ 
  \begin{enumerate}
  \item The group $P_{K,\OO}^{\fff,+}$ is a congruence subgroup modulo
    $\fff\infty$.
  \item The map $I(\OO,\fff)\to I_K^{\fff}$ defined by $I\mapsto I\OO_K$ is an
    isomorphism.
  \item The isomorphism from (2) induces an isomorphism
    $I(\OO,\fff)/P(\OO,\fff)^{+}\to I_K^\fff/P_{K,\OO}^{\fff,+}$.  
  \end{enumerate}
\end{lemma}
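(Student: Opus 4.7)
The plan is to treat the three claims in order, with part (2) being the essential input that lets the other two follow formally. For part (1), I would check directly that $P_{K,\OO}^{\fff,+}$ contains the ray group of principal ideals $\alpha\OO_K$ with $\alpha\in\OO_K$, $\alpha\equiv 1\bmod\fff$, and $\alpha$ totally positive. Since by definition of the conductor one has $\fff\subset\OO$, the congruence forces $\alpha=1+(\alpha-1)\in\OO$ and $1=\alpha-(\alpha-1)\in\alpha\OO+\fff$, while total positivity yields $\N(\alpha)>0$ (the complex embeddings contribute in conjugate pairs). Hence $\alpha\OO_K$ is a generator of $P_{K,\OO}^{\fff,+}$, which proves (1).

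For part (2), I would appeal to the standard correspondence between invertible ideals of an order that are coprime to its conductor and ideals of the maximal order coprime to the same conductor, as established for instance in \cite{conrad} and used in \cite{MR3368170}. The argument is essentially local: at every prime $\ppp$ of $\OO$ coprime to $\fff$ one has $\OO_\ppp=(\OO_K)_\ppp$, so extension $I\mapsto I\OO_K$ and contraction $\aaa\mapsto\aaa\cap\OO$ are mutually inverse bijections between invertible fractional $\OO$-ideals supported outside $\fff$ and fractional $\OO_K$-ideals coprime to $\fff$. Both operations preserve the group structure, giving the asserted isomorphism. This local-to-global passage is where the bulk of the work lies, and I would invoke the cited reference rather than reprove it from scratch.

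For part (3), given (2), it suffices to show that the isomorphism $I\mapsto I\OO_K$ sends $P(\OO,\fff)^+$ onto $P_{K,\OO}^{\fff,+}$. Both groups are generated by principal ideals attached to the same collection of elements $\alpha\in\OO$ with $\alpha\OO+\fff=\OO$ and $\N(\alpha)>0$: the generator of $P(\OO,\fff)^+$ is $\alpha\OO$, and its image under $I\mapsto I\OO_K$ is $(\alpha\OO)\OO_K=\alpha\OO_K$, which is the corresponding generator of $P_{K,\OO}^{\fff,+}$. Hence the image of $P(\OO,\fff)^+$ equals $P_{K,\OO}^{\fff,+}$, and, since (2) is a bijection, the preimage of $P_{K,\OO}^{\fff,+}$ coincides with $P(\OO,\fff)^+$. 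The induced map on quotients is therefore an isomorphism, as required.
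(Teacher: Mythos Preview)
Your proposal is correct and follows essentially the same approach as the paper: for (1) you verify directly that elements $\alpha\equiv 1\bmod\fff$ with $\alpha$ totally positive lie in $\OO$, generate an ideal coprime to $\fff$, and have positive norm; for (2) you cite the same standard result from \cite{MR3368170} (the paper invokes Proposition~3.4 there without further comment); and for (3) you observe, as the paper does, that generators of $P(\OO,\fff)^+$ map to generators of $P_{K,\OO}^{\fff,+}$. Your write-up is slightly more detailed than the paper's, which dispatches (2) and (3) in one line each, but the content is the same.
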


\begin{proof}
  (1): it is obvious that $P_{K,\OO}^{\fff,+}\subset I_{K}^{\fff}$. We need to show that
  $P_{K,\OO}^{\fff,+}$ contains all principal ideals $\alpha\OO_K$ with
  $\alpha\in\OO_K$, $\alpha\equiv 1\bmod\fff$ and $\sigma(\alpha)>0$ for all
  real embeddings $\sigma$ of $K$. Any such $\alpha$ clearly satisfies
  $\N(\alpha)>0$ and, moreover, $\alpha\in 1+\fff$ implies $\alpha\in\OO$ and
  $\alpha\OO+\fff=\OO$. Hence, $\alpha\in P_{K,\OO}^{\fff,+}$.
  
  (2): this is \cite[Proposition 3.4]{MR3368170}.

  (3): the isomorphism from (2) maps $P(\OO,\fff)^+$ to $P_{K,\OO}^{\fff,+}$.
\end{proof}

By the existence theorem of class field theory, there is a unique Abelian
extension $K_{\OO}^{+}/K$, all of whose ramified primes divide $\fff\infty$, for which the Artin map
\begin{equation*}
  \psi_{\fff} : I_{K}^{\fff}\to \Gal(K_\OO^{+}/K)
\end{equation*}
is surjective and has kernel $P_{K,\OO}^{\fff,+}$.
Let $L$ be the normal closure of $K_{\OO}^{+}$. For any prime $p$ unramified in $L/\QQ$, we denote
its Frobenius class in $\Gal(L/\QQ)$ by $\Frob_{L/\QQ,p}$.

\begin{lemma}\label{lem:CFT}
  Let $I\subset\OO$ be an
  ideal with $I+\fff=\OO$. Suppose that the prime $p$ is unramified in $L$, satisfies $p\OO_K + \fff=\OO_K$, and moreover there is $\sigma\in\Frob_{L/\QQ,p}$ with
  \begin{equation*}
\sigma|_{K_{\OO}^{+}}=\psi_\fff(\OO_KI^{-1})\in \Gal(K_{\OO}^{+}/K).
\end{equation*}
  Then there is $\alpha\in I$ with $\N(\alpha)=\N(\OO_KI)p$.
\end{lemma}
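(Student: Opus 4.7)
The plan is to use the Artin reciprocity map $\psi_\fff$ to translate the Galois-theoretic hypothesis on $\sigma$ into the statement that a particular invertible $\OO$-ideal is principal, and then to extract an explicit generator $\alpha$ of that principal ideal with the required properties. More precisely, I would first produce a degree-one prime $\ppp$ of $\OO_K$ above $p$ whose Artin symbol equals $\psi_\fff(\OO_KI^{-1})$, then deduce that $\ppp\cdot\OO_KI$ lies in the kernel $P_{K,\OO}^{\fff,+}$, pull this back through the isomorphism of Lemma \ref{lem:subgroup_properties} to obtain a principal ideal $\ppp'\cdot I=\gamma\OO$ lying in $P(\OO,\fff)^+$, and finally verify that $\gamma\in I$ and $\N(\gamma)=p\N(\OO_KI)$.

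For the first step, I would observe that since $\sigma|_{K_\OO^+}\in\Gal(K_\OO^+/K)$ fixes $K$ pointwise and $K\subset K_\OO^+\subset L$, the automorphism $\sigma\in\Gal(L/\QQ)$ itself restricts to the identity on $K$, so $\sigma\in\Gal(L/K)$. Now choose a prime $\PPP$ of $L$ above $p$ whose Frobenius is $\sigma$, and set $\ppp:=\PPP\cap\OO_K$. Since $\sigma$ lies in the decomposition group of $\PPP$ both over $\QQ$ and over $K$, the standard compatibility of Frobenius with restriction shows that the residue degree of $\ppp$ over $p$ is $1$, hence $\N(\ppp)=p$, and identifies the Frobenius of $\ppp$ in $K_\OO^+/K$ with $\sigma|_{K_\OO^+}$. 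By hypothesis this equals $\psi_\fff(\OO_KI^{-1})$, and so $\ppp\cdot\OO_KI\in\ker\psi_\fff=P_{K,\OO}^{\fff,+}$.

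For the transfer to the ring-of-multipliers side, the assumption $p\OO_K+\fff=\OO_K$ ensures that $\ppp$ is coprime to $\fff$, so Lemma \ref{lem:subgroup_properties}(2) yields a unique invertible prime $\ppp'$ of $\OO$ with $\ppp'\OO_K=\ppp$, and then Lemma \ref{lem:subgroup_properties}(3) forces $\ppp'I\in P(\OO,\fff)^+$. Writing this as a quotient $(\alpha_1/\alpha_2)\OO$ of two principal ideals with $\alpha_i\in\OO$, $\alpha_i\OO+\fff=\OO$, and $\N(\alpha_i)>0$, the integrality of $\ppp'I$ forces $\gamma:=\alpha_1/\alpha_2$ to lie in $\OO$ itself. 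Then $\gamma\OO=\ppp'I\subset I$ gives $\gamma\in I$, the norms multiply to $\N(\gamma)=\N(\alpha_1)/\N(\alpha_2)>0$, and extending scalars via $\gamma\OO_K=\ppp\cdot\OO_KI$ yields $\N(\gamma)=\N(\gamma\OO_K)=\N(\ppp)\N(\OO_KI)=p\N(\OO_KI)$, so $\alpha:=\gamma$ does the job.

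The main technical hurdle will be the first step: correctly reading off from a Frobenius $\sigma$ in the normal closure $L$ the data of a degree-one prime of $K$ together with its Artin symbol in $K_\OO^+/K$. The compatibility of Frobenius with restriction is the precise tool, but it must be handled carefully because $K_\OO^+/\QQ$ need not be Galois. The passage from the $\OO_K$-side to the $\OO$-side is then formal given Lemma \ref{lem:subgroup_properties}, the only minor subtlety being that integrality of $\ppp'I$ is what guarantees a generator inside $\OO$ with positive norm rather than merely in the fraction field.
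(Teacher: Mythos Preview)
Your proposal is correct and follows essentially the same route as the paper: pick a prime $\PPP$ of $L$ with Frobenius $\sigma$, show that $\ppp=\PPP\cap\OO_K$ has degree one with $\psi_\fff(\ppp)=\sigma|_{K_\OO^+}$, transfer the resulting principal-class statement to the order $\OO$ via Lemma~\ref{lem:subgroup_properties}, and extract a generator of $(\ppp\cap\OO)\cdot I$ with positive norm lying in $I$. Your handling of the last step (writing the generator as a ratio $\alpha_1/\alpha_2$ to see that $\N(\gamma)>0$) is in fact slightly more explicit than the paper's, which simply asserts the existence of such a generator.
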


\begin{proof}
Let $\PPP$ be a prime ideal of $\OO_L$ with
  $\Frob_{\PPP/p}=\sigma$. Then $\sigma(x)\equiv x^p\bmod\PPP$ for all
  $x\in\OO_L$. Let $\ppp=\PPP\cap K$. Since $\sigma\in\Gal(L/K)$, we get
  $x=\sigma(x)\equiv x^p\bmod\ppp$ for all $x\in\OO_K$, and thus $\ppp$ is a
  degree-$1$-prime ideal of $\OO_K$ above $p$. Hence, $\N(\ppp)=p$, which shows
  moreover that $\sigma(x)\equiv x^{\N(\ppp)}\bmod\PPP$ for all $x\in\OO_L$, so
  $\sigma=\Frob_{\PPP/\ppp}\in\Gal(L/K)$. Hence,
  $\psi_\fff(\ppp)=\Frob_{K_{\OO}^{+}/K,\ppp}=\sigma|_{K_{\OO}^{+}}=\psi_\fff(\OO_KI^{-1})$,
which shows that the classes of $\ppp$ and $\OO_KI^{-1}$ in
$I_K^\fff/P_{K,\OO}^{\fff,+}$ coincide. By Lemma \ref{lem:subgroup_properties},
the classes of $P=\ppp\cap\OO$ and $I^{-1}$ in $I(\OO,\fff)/P(\OO,\fff)^{+}$
coincide as well, and thus $\alpha I^{-1}=P$ for some $\alpha\in
P(\OO,\fff)^{+}$. In particular, $\alpha\in\alpha\OO=\alpha I^{-1}I=PI\subset I$. Since $\N(\alpha)>0$, we get $\N(\alpha)=\N(\alpha\OO_K)=\N(\alpha
\OO_KI^{-1}\OO_K I)=\N(\alpha \OO_K I^{-1})\N(\OO_K I)=\N(\ppp)\N(\OO_K I)=p\N(\OO_K I)$, as desired.
\end{proof}

\subsection*{Proof of Theorem \ref{thm:norm_primes}}
Assume that $M$ is an invertible ideal of its ring of multipliers $\OO$, and let $\fff$ be the conductor ideal of the order $\OO$. By \cite[Theorem 3.11]{MR3368170}, the invertible ideal $M$ is of the form
$M=\gamma I$, where $\gamma\in K\smallsetminus\{0\}$ and $I$ is an ideal of
$\OO$ with $I+\fff=\OO$. By Lemma \ref{lem:CFT} and the Chebotarev density
theorem, a positive density of the primes have the form $p=\N(\alpha)/\N(\OO_KI)$ with
$\alpha\in I$. Multiplying $\alpha$ and $I$ by $\gamma$ proves the result.

\subsection*{Proof of Corollary \ref{cor:explicit_form}}
Let $M$ and $\OO$ be as in Example \ref{ex:fail}.
The element $\gamma = 2\sqrt[3]{2}\in M$ satisfies $\N(\gamma)=16$. Since
$\gamma\OO$ is invertible, Theorem \ref{thm:norm_primes} yields a positive
density of primes of the form $\N(\alpha)/16$, for
$\alpha\in\gamma\OO\subset M$. A fortiori, there is a positive density of
primes of the form $\N(\alpha)/16$ for $\alpha\in M$.

\subsection*{Proof of Theorem \ref{thm:norm_arith}}
Let $\OO$ be the ring of multipliers of $M$. Since $\OO$ is also the ring of multipliers of all modules $\alpha M$, for $\alpha\in K$, we may multiply all our basis elements $\omega_i$ by an appropriate integer and thus assume from now on that $M\subset\OO$ is an ideal of $\OO$. Let $\gamma\in M$,
then $\gamma\OO$ is an invertible ideal of $\OO$ (with ring of multipliers
$\OO$). By Theorem \ref{thm:norm_primes}, a positive proportion of the primes
have the form $\N(\alpha)/\N(\gamma\OO_K)$ for $\alpha\in\gamma\OO\subset M$. By
the Green-Tao theorem \cite{MR2415379}, the set of values $\N(\alpha)/\N(\gamma\OO_K)$, for
$\alpha\in M$, contains arithmetic progressions of arbitrary length. Hence, the
same holds for the set of values $\N(\alpha)/\N(\OO_KM) =
[\OO_KM:\gamma\OO_K]\cdot\N(\alpha)/\N(\gamma\OO_K)$.

\subsection*{Acknowledgement}
We thank the anonymous referee for pointing out a minor inaccuracy in a previous version of the proof of Theorem \ref{thm:qf_primitive}.

\bibliographystyle{plain}
\bibliography{bibliography}

\begin{thebibliography}{10}

\bibitem{zbMATH02625310}
P.~{Bernays}.
\newblock {\"Uber die Darstellung von positiven, ganzen Zahlen durch die
  primitiven, bin\"aren quadratischen Formen einer nicht-quadratischen
  Diskriminante.}
\newblock {PhD Dissertation G\"ottingen. 129 pages}, 1912.

\bibitem{MR0195803}
A.~I. Borevich and I.~R. Shafarevich.
\newblock {\em Number theory}.
\newblock Translated from the Russian by Newcomb Greenleaf. Pure and Applied
  Mathematics, Vol. 20. Academic Press, New York-London, 1966.

\bibitem{MR3742196}
T.~D. Browning and L.~Matthiesen.
\newblock Norm forms for arbitrary number fields as products of linear
  polynomials.
\newblock {\em Ann. Sci. \'{E}c. Norm. Sup\'{e}r. (4)}, 50(6):1383--1446, 2017.

\bibitem{conrad}
K.~Conrad.
\newblock The conductor ideal.
\newblock
  \url{http://www.math.uconn.edu/~kconrad/blurbs/gradnumthy/conductor.pdf}.
\newblock Accessed: 26 October 2018.

\bibitem{dlVPpt3}
C.~J. de~la {Vall\'ee Poussin}.
\newblock {Recherches analytiques sur la th\'eorie des nombres premiers.
  Troisi\`eme partie}.
\newblock {Annales de la soci\'et\'e scientifique de Bruxelles XX (2)}, 1896.

\bibitem{dlVPpt4}
C.~J. de~la {Vall\'ee Poussin}.
\newblock {Recherches analytiques sur la th\'eorie des nombres premiers.
  Quatri\`eme partie}.
\newblock {Annales de la soci\'et\'e scientifique de Bruxelles XXI (2)}, 1897.

\bibitem{MR3295668}
P.~K. Dey and R.~Thangadurai.
\newblock The length of an arithmetic progression represented by a binary
  quadratic form.
\newblock {\em Amer. Math. Monthly}, 121(10):932--936, 2014.

\bibitem{MR2415379}
B.~Green and T.~Tao.
\newblock The primes contain arbitrarily long arithmetic progressions.
\newblock {\em Ann. of Math. (2)}, 167(2):481--547, 2008.

\bibitem{IK04}
H.~Iwaniec and E.~Kowalski.
\newblock {\em Analytic Number Theory}.
\newblock Number v. 53 in American Mathematical Society colloquium
  publications. American Mathematical Society, 2004.

\bibitem{MR3368170}
C.~Lv and Y.~Deng.
\newblock On orders in number fields: {P}icard groups, ring class fields and
  applications.
\newblock {\em Sci. China Math.}, 58(8):1627--1638, 2015.

\bibitem{Maynard2013}
J.~Maynard.
\newblock {On the Brun-Titchmarsh theorem}.
\newblock {\em Acta Arithmetica}, 157(3):249--296, 2013.

\bibitem{neukirchANT}
J.~Neukirch.
\newblock {\em Algebraic Number Theory}.
\newblock Translated from the German by Norbert Schappacher. Springer-Verlag
  Berlin Heidelberg, 1999.

\bibitem{zbMATH03480705}
J.~Pintz.
\newblock {Elementary methods in the theory of L-functions. II: On the greatest
  real zero of a real L-function.}
\newblock {\em {Acta Arith.}}, 31:273--289, 1976.

\bibitem{zbMATH06748168}
J.~Thorner and A.~Zaman.
\newblock {An explicit bound for the least prime ideal in the Chebotarev
  density theorem.}
\newblock {\em {Algebra Number Theory}}, 11(5):1135--1197, 2017.

\end{thebibliography}
\end{document}